\newcommand{\const}{\rm const}
  \newcommand{\Dom}{\rm  Dom}
\DeclareMathOperator*{\esssup}{ess\,sup}
\theoremstyle{plain}
\newtheorem{theorem}{Theorem}[section]
\newtheorem{definition}{Definition}[section]
\newcommand\myatop[2]{\genfrac{}{}{0pt}{}{#1\hfill}{#2\hfill}}
\renewenvironment{proof}{{\bf{Proof.}}}{\hfill $\Box$ \\}
\title{\large \textbf{Homogeneous kernel integral operators\\ in Grand Lebesgue Spaces}}
\footnotesize\date{}
\author{\normalsize Maria Rosaria Formica ${}^{1}$,   \normalsize Eugeny Ostrovsky
${}^2$ and \normalsize Leonid Sirota ${}^3$}
\begin{document}

  \maketitle

\begin{center}
{\footnotesize ${}^{1}$ Universit\`{a} degli Studi di Napoli \lq\lq Parthenope\rq\rq, via Generale Parisi 13,\\
Palazzo Pacanowsky, 80132,
Napoli, Italy.} \\

\vspace{1mm}

{\footnotesize e-mail: mara.formica@uniparthenope.it} \\

\vspace{2mm}

{\footnotesize ${}^{2,\, 3}$  Bar-Ilan University, Department of Mathematics and Statistics, \\
52900, Ramat Gan, Israel.} \\

\vspace{1mm}

{\footnotesize e-mail: eugostrovsky@list.ru}\\

\vspace{1mm}

{\footnotesize e-mail: sirota3@bezeqint.net} \\

\end{center}

\vspace{3mm}

\begin{abstract}
In this short report we estimate and calculate the exact value of norms of multilinear integral operators having homogeneous kernel, acting between two Grand Lebesgue Spaces.
\end{abstract}

\vspace{4mm}

{\it Key words:} \hspace{1mm} measurable spaces, Lebesgue-Riesz space, Grand Lebesgue space, multilinear integral operators, kernel, homogeneity.

 \vspace{5mm}

\section{Introduction.}

\vspace{4mm}

 \hspace{3mm} Let $ \ (X = \{x\}, \cal{F}, \mu)  \ $ and  $ \ (Y = \{y\}, \cal{M}, \nu)  \ $ be two measurable spaces with corresponding non-trivial sigma finite measures $\mu$ and $\nu$, respectively. For the numerical valued measurable functions define, as ordinarily, the classical Lebesgue-Riesz norms
\begin{equation} \label{Lp}
||g||_{p,X} = ||g||_p \stackrel{def}{=} \left[ \ \int_X |g(x)|^p \ \mu(dx) \ \right]^{1/p}, \ 1 \le p < \infty,
\end{equation}
\begin{equation} \label{Lq}
||h||_{q,Y} = ||h||_q \stackrel{def}{=} \left[ \ \int_Y |h(y)|^q \ \nu(dy) \ \right]^{1/q}, \ 1 \le q < \infty,
\end{equation}

\begin{equation*}
||g||_{\infty,X} := \esssup\limits_{x \in X} |g(x)|, \ \ (p=\infty) \ , \hspace{5mm} ||h||_{\infty,Y}:= \esssup\limits_{y \in Y} |h(y)|, \ \ (q=\infty);
\end{equation*}
and the corresponding spaces
$$
L_p(X)= \{g: X\to \mathbb R \ : \ ||g||_{p,X} < \infty\}; \ \ \ L_q(Y) = \{h: Y \to \mathbb R \ : \ ||h||_{q,Y} < \infty \}.
$$

 \vspace{4mm}

 \hspace{3mm} Let also $ \ U \ $ be an integral operator (linear or not) of the form

\begin{equation} \label{oper}
U[f](x) \stackrel{def}{=} \int_Y k(x,y, f(y)) \ \nu(dy),
\end{equation}
with the kernel (measurable) function $ \ k = k(x,y,v). \ $ \par

 \vspace{3mm}

\hspace{3mm} For these operators there are many works devoting to Lebesgue-Riesz norm estimations of the following quantity
$$
N[U]_{p,q}(s) \stackrel{def}{=} \sup_{\myatop{f \in L_{q,Y}} {||f||_{q,Y} \le s}} ||U[f]||_{p,X}, \ \  0 < s < \infty,
$$
see, for instance, \cite{liflyandostrovskysirotaturkish2010,Mitrinovich etc,Okikiolu,Stein int,Syrjanen,Wu Fu} and so on. \par

 \ Of course, for the linear operator $ \ U \ $ it is necessary to estimate the $ \ L_{q,Y}  \to L_{p,X} \ $ norm of this operator

$$
||U||_{q \to p} \stackrel{def}{=} \sup_{0 \ne f \in L_q} \left[ \ \frac{||U[f]||_p}{||f||_q} \ \right].
$$

\vspace{4mm}

\begin{definition}
{\rm The {\it multilinear} integral operator $M = M[Q](\vec{f})(x)$, where

$\vec{f}=(f_1,f_2,\ldots, f_m)$, $m = 2,3,\ldots$, and $Q$ is the (measurable) kernel,
$$
Q = Q(x; x_1,x_2,\ldots,x_m), \ \ x,x_j \in X,
$$
  has the form
\begin{equation} \label{multilin}
\begin{split}
 & \hspace{5cm}{M=M[Q](\vec{f})(x) : =}  \\[1mm]
 &  \int_{X^m} Q(x; x_1,x_2,\ldots,x_m) \ f_1(x_1) f_2(x_2) \ldots f_m(x_m) \ \mu(dx_1) \mu(dx_2) \ldots \mu(dx_m).
 \end{split}
\end{equation}
}
\end{definition}
 \ These operators have been studied in particular in the works \cite{Brenyi,Fu Grafakos Lu Zhao,Li Cen Fu Hang,Stein Harm,Wu Yan}, and so on.

 \vspace{4mm}

  \hspace{3mm} {\it We assume further that } \ $ \ X = \mathbb R_+ \ $  with ordinary Lebesgue measure $ \ d \mu = dx \ $
  {\it and that  the kernel } $ \ Q  \ $ {\it is homogeneous of the degree } $-m$, that is

\begin{equation} \label{homogen}
\forall \delta > 0 \ \Rightarrow  \ Q(\delta x; \delta x_1, \delta x_2, \ldots, \delta x_m) = \delta^{-m} \ Q(x; x_1,x_2,\ldots,x_m).
\end{equation}

 \vspace{4mm}

\ Put  $ \ \vec{p} = \{ p_1, p_2, \ldots,p_m \}$ where $p_j \in [1,\infty)$, and
\begin{equation} \label{p fun vec p}
p := \left(  \ \sum_{j=1}^m \frac{1}{p_j} \ \right)^{-1}.
\end{equation}

Let us introduce also the following auxiliary variable (function)
\begin{equation} \label{key value}
\begin{split}
 & \hspace{3cm}{\Theta_m(\vec{p}) = \Theta_m(p_1,p_2,\ldots,p_m)  \stackrel{def}{=}}  \\[2mm]
 &  \int_{\mathbb R^m_+} |Q(1,x_1,x_2, \ldots, x_m)| \ x_1^{-1/p_1} \ x_2^{-1/p_2} \ldots x_m^{-1/p_m} \ dx_1 dx_2 \ldots dx_m,
 \end{split}
\end{equation}
and define the set

\begin{equation} \label{key set}
D_m = D_m[Q] \stackrel{def}{=} \{\vec{p} = \{ p_1, p_2, \ldots,p_m \}, p_j \in [1,\infty), \  j = 1,2,\ldots,m \ : \ \Theta_m(\vec{p}) < \infty \ \}.
\end{equation}

\vspace{3mm}

 \hspace{3mm} {\it It will be presumed that this set contains some non-empty open set in} $ \mathbb R^m_+ $.

\vspace{3mm}

 \ It is proved in \cite{Brenyi} that if $\Theta_m(\vec{p}) < \infty $ for some vector $ \  \vec{p}= \{ p_1,p_2,\ldots,p_m \} $, $ \  p_j \in [1,\infty)$, then

\begin{equation} \label{important relation}
||M||_p \le \Theta_m(p_1,p_2,\ldots,p_m) \ ||f_1||_{p_1} ||f_2||_{p_2} \ldots ||f_m||_{p_m}.
\end{equation}

\vspace{3mm}

  Moreover, the last relation (\ref{important relation}) is  non-improvable for all the admissible variables $ \ m, Q \ $
 if, of course, $ \ \vec{p} \in D_m(Q) \ $ or equally  $ \ \Theta_m(p_1,p_2,\ldots,p_m) < \infty. \ $ \par

\vspace{4mm}

 \hspace{3mm} {\bf  Our aim in this short preprint is to extend  the last estimate} \eqref{important relation} {\bf from the ordinary Lebesgue-Riesz spaces to the so-called Grand Lebesgue Spaces (GLS).} \par

\vspace{4mm}

\begin{center}

 {\it A brief review about  Grand Lebesgue Spaces (GLS).}

\end{center}

\vspace{4mm}

 \hspace{3mm}   For definitions and properties of the Grand Lebesgue Spaces see, e.g.,
 \cite{Buld Koz AMS,Ermakov etc. 1986,ForKozOstr_Lithuanian,Kozachenko-Ostrovsky 1985,Kozachenko at all 2018,liflyandostrovskysirotaturkish2010,Ostrovsky1999,Ostrovsky HIAT,Ostrov Prokhorov,Ostr Integ oper,Samko-Umarkhadzhiev,Samko-Umarkhadzhiev-addendum} and so on.\par

 \vspace{3mm}

   \hspace{3mm} Let $ \ (\Omega = \{\omega\}, \cal{B}, \mu)  \ $ be a measurable space equipped with sigma finite non-trivial measure $ \ \mu $, not necessarily probabilistic. The ordinary  Lebesgue-Riesz norm for numerical valued measurable functions $ \ f: \Omega \to \mathbb R \ $ is defined by

$$
||f||_{\Omega,p} = ||f||_p =\left[ \ \int_{\Omega} |f(\omega)|^p \ \mu(d \omega) \ \right]^{1/p}, \ 1 \le p < \infty.
$$

 \ Let $ \ a = {\const} \ge 1, \ b \in (a, \infty] \ $ and $ \ \psi = \psi(p), \ p \in [a,b)$, a positive measurable numerical valued function, not necessarily finite in $b$, such that $ \ \inf_{p \in [a,b)} \psi(p) > 0$.
%

\vspace{3mm}

 For instance, if $m = {\const} > 0$,
$$
\psi_m(p) := p^{1/m},  \ \ p \in [1,\infty),
$$
or, for $ 1 \leq  a < b < \infty, \ \alpha,\beta = \const \ge 0$,
$$
   \psi_{a,b; \alpha,\beta}(p) := (p-a)^{-\alpha} \ (b-p)^{-\beta}, \  \ p \in
   (a,b),
$$
are generating functions.

The (Banach) Grand Lebesgue Space $  \ G \psi  = G\psi(a,b) $
    consists of all the real (or complex) numerical valued measurable functions
$f: \Omega \to \mathbb R$ having finite norm
\begin{equation} \label{norm psi}
    ||f||_{G\psi} \stackrel{def}{=} \sup_{p \in [a,b)} \left[ \frac{||f||_p}{\psi(p)} \right].
 \end{equation}

 \vspace{4mm}

 \ The function $ \  \psi = \psi(p) \  $ is named the {\it  generating function } for for the space $G \psi$. \par
If for instance
$$
  \psi(p) = \psi_{r}(p) = 1, \ \  p = r;  \ \  \ \psi_{r}(p) = +\infty,   \ \ p \ne r,
$$
 where $ \ r = {\const} \in [1,\infty),  \ C/\infty := 0, \ C \in \mathbb R, \ $ (an extremal case), then the corresponding $ \  G\psi^{(r)} $ space coincides  with the classical Lebesgue-Riesz space $ \ L_r = L_r(\Omega)$.

\vspace{4mm}
   The belonging of a function $ f: \Omega \to \mathbb{R}$ to some $ G\psi$
space is closely related to its tail function behavior

 $$
 T_f(t) \stackrel{def}{=} {\bf P}(|f| \ge t)={\bf \mu}(\omega:\ |f(\omega)| \ge t), \ t \ge 0,
 $$
 as $ \ t \to 0+ \ $ as well as when $ \ t \to \infty $ (see, e.g.,
\cite{Kozachenko-Ostrovsky 1985,ForKozOstr_Lithuanian}).

\hspace{3mm} In detail, let $  \upsilon  $ be a non-zero r.v. belonging to some Grand Lebesgue Space $ \ G\psi \ $ and suppose  $ \ ||\upsilon||_{G\psi} = 1 $. Define the Young-Fenchel (or Legendre) transform of the function $\ h(p)=h[\psi](p)=p \ln \psi(p) \ $, 
 (see, e.g., \cite{Buld Koz AMS} and references therein)
$$
h^*(v)= h^*[\psi](v) := \sup_{p \in \Dom[\psi]}(p v - p \ln \psi(p))
$$
where $ \ \Dom[\psi] \ $ denotes the domain of definition (and finiteness) for the function $ \ \psi(\cdot). \ $

\vspace{1mm}
\ We get
\begin{equation} \label{Young Fen}
T_{\upsilon}(t) \le \exp(-h^*(\ln t)), \ \ t \ge e.
\end{equation}
Note that the inverse conclusion, under suitable natural conditions, is true (see, e.g., \cite{Kozachenko-Ostrovsky 1985}). \par
 Notice also that these spaces coincide, up to equivalence of the norms and under appropriate conditions, with the so-called {\it exponential Orlicz} spaces (see, e.g.,
\cite{Kozachenko-Ostrovsky 1985,Kozachenko at all 2018,Ostrovsky1999,Ostrovsky HIAT}).

 \vspace{4mm}

\section{Main result.}

\vspace{4mm}

 \hspace{3mm} 
 Here we extend the estimate \eqref{important relation} from the ordinary Lebesgue-Riesz spaces to the Grand Lebesgue Spaces.
 Recall that here $ \ X = \mathbb R_+  \ $ equipped with the
 classical Lebesgue measure $ \ d \mu = dx. \ $ \par

\begin{theorem}
Let $ \ G\psi_j(a_j, b_j), \ j = 1,2,\ldots,m $, where $ \ 1 \le a_j < b_j \le \infty $, be Grand Lebesgue Spaces with generating functions $ \ \psi_j(\cdot)$ and put $\vec{\psi}=(\psi_1,\ldots,\psi_m)$.

\noindent Let $\beta_m[Q, \vec{\psi}](p)$ defined by
\begin{equation} \label{final gener fun}
\beta_m[Q, \vec{\psi}](p) \stackrel{def}{=} \inf \left\{\Theta_m(\vec{p}) \cdot \prod_{j = 1}^m \{ \ ||f_j||_{G\psi_j(a_j,b_j)} \cdot \psi_j(p_j) \ :  \ \sum_{j=1}^m \frac{1}{p_j} = \frac{1}{p} \ ; \ p_j \in (a_j,b_j)\right\},
\end{equation}
where $\Theta_m(\vec{p})$ is defined in \eqref{key value} and the kernel $Q$ is homogeneous of degree $-m$.\\[1mm]
\noindent Then, the multilinear integral operator defined in \eqref{multilin} satisfies
\begin{equation} \label{main estim}
||M[Q](\vec{f})||_{G\beta_m[Q, \vec{\psi}]} \le 1,
\end{equation}
with correspondent tail estimation (\ref{Young Fen}). Moreover, this estimation (\ref{main estim}) is exact still for the classical Lebesgue-Riesz spaces.

\end{theorem}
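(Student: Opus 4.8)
The plan is to reduce the Grand Lebesgue estimate \eqref{main estim} to the sharp multilinear Lebesgue--Riesz bound \eqref{important relation}, and to observe that the generating function $\beta_m[Q,\vec{\psi}]$ has been built precisely so that, after optimizing over the admissible exponent splittings, the output GLS norm is controlled by exactly $1$.

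First I would unwind the definition \eqref{norm psi} of the GLS norm: the desired inequality $\|M[Q](\vec{f})\|_{G\beta_m[Q,\vec{\psi}]} \le 1$ is equivalent to the family of scalar bounds
$$\|M[Q](\vec{f})\|_p \le \beta_m[Q,\vec{\psi}](p)$$
holding for every $p$ in the domain of $\beta_m$. So it suffices to prove the latter for each fixed $p$.

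Fix such a $p$ and choose any admissible decomposition $\vec{p}=(p_1,\ldots,p_m)$ with $\sum_{j=1}^m 1/p_j = 1/p$ and $p_j \in (a_j,b_j)$. Applying \eqref{important relation} to this exponent vector, consistently with \eqref{p fun vec p}, gives
$$\|M[Q](\vec{f})\|_p \le \Theta_m(\vec{p}) \, \prod_{j=1}^m \|f_j\|_{p_j}.$$
Since $p_j \in (a_j,b_j) \subset [a_j,b_j)$, the definition \eqref{norm psi} furnishes the slice inequality $\|f_j\|_{p_j} \le \|f_j\|_{G\psi_j(a_j,b_j)} \, \psi_j(p_j)$ for each $j$, whence
$$\|M[Q](\vec{f})\|_p \le \Theta_m(\vec{p}) \, \prod_{j=1}^m \|f_j\|_{G\psi_j(a_j,b_j)} \, \psi_j(p_j).$$
As the left-hand side is independent of the splitting, I would take the infimum of the right-hand side over all admissible $\vec{p}$; by \eqref{final gener fun} this infimum is exactly $\beta_m[Q,\vec{\psi}](p)$, yielding $\|M[Q](\vec{f})\|_p \le \beta_m[Q,\vec{\psi}](p)$ and hence \eqref{main estim}. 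Competitors with $\Theta_m(\vec{p}) = \infty$ are harmlessly discarded in the infimum, and the standing hypothesis that $D_m$ contains a nonempty open set guarantees at least one finite competitor, so that $\beta_m$ is finite precisely on the nondegenerate range $1/p \in \left( \sum_{j=1}^m 1/b_j, \ \sum_{j=1}^m 1/a_j \right)$, which identifies the admissible interval $[a,b)$ of the output space $G\beta_m$.

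The tail estimate then follows at once by inserting $\|M[Q](\vec{f})\|_{G\beta_m} \le 1$ into the general GLS tail passage \eqref{Young Fen} with generating function $\beta_m$. For the concluding sharpness claim, I would specialize each $\psi_j$ to the extremal generating function $\psi_{r_j}$ reproducing $L_{r_j}$; then the constraint set in \eqref{final gener fun} forces the single vector $\vec{p} = (r_1,\ldots,r_m)$, the infimum is attained trivially, and \eqref{main estim} collapses to \eqref{important relation}, whose non-improvability is already recorded in the excerpt. The only delicate part is the domain bookkeeping together with the handling of the infimum over the open constraint; there is no analytic difficulty, since the sharp constant $\Theta_m$ and all the genuine estimation are inherited directly from \eqref{important relation}.
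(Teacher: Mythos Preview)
Your proposal is correct and follows essentially the same route as the paper: apply the sharp Lebesgue--Riesz bound \eqref{important relation}, replace each $\|f_j\|_{p_j}$ by $\|f_j\|_{G\psi_j}\psi_j(p_j)$ via the definition of the GLS norm, and then minimize over admissible splittings to recognize $\beta_m[Q,\vec{\psi}](p)$. Your write-up is in fact more explicit than the paper's on the domain bookkeeping and on the sharpness argument via the extremal generating functions $\psi_{r_j}$, but the underlying argument is identical.
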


\begin{proof}
Let us consider the Grand Lebesgue Spaces
$ G\psi_j(a_j, b_j), \ j = 1,2,\ldots,m $, where $ \ 1 \le a_j < b_j \le \infty$, with generating functions $ \ \psi_j(\cdot);  \ $ we have

\begin{equation*} \label{key condit}
f_j \in G\psi_j(a_j, b_j)\hspace{3mm}\Leftrightarrow  \hspace{3mm} ||f_j||_{G\psi_j(a_j,b_j)}  < \infty.
\end{equation*}
We deduce, therefore, by virtue of the direct definition of the Grand Lebesgue norm
\begin{equation} \label{p inequality}
||f_j||_{p_j} \le  ||f_j||_{G\psi_j(a_j,b_j)} \cdot \psi_j(p_j), \ \ p_j \in ( a_j, b_j ), \ \ j = 1,2,\ldots,m.
\end{equation}
It follows immediately from  (\ref{important relation})
\begin{equation} \label{M ineq}
||M||_p \le \Theta_m(\vec{p}) \cdot \prod_{j = 1}^m \{ \ ||f_j||_{G\psi_j(a_j,b_j)} \cdot \psi_j(p_j) \ \}
\end{equation}
Finally, introduce the generating function defined in \eqref{final gener fun} and put 
$$ (r,s) := \{p: \beta_m[Q, \vec{\psi}](p) < \infty  \}.$$ 
The estimation \eqref{M ineq} yields \eqref{main estim}, with  correspondent tail estimation (\ref{Young Fen}). Moreover, (\ref{main estim}) is exact still for the classical Lebesgue-Riesz spaces.

\end{proof}

\vspace{6mm}

\emph{\textbf{\footnotesize Acknowledgements}.} {\footnotesize M.R. Formica is member of Gruppo
Nazionale per l'Analisi Matematica, la Probabilit\`{a} e le loro Applicazioni (GNAMPA) of the Istituto Nazionale di Alta Matematica (INdAM) and member of the UMI group \lq\lq Teoria dell'Approssimazione e Applicazioni (T.A.A.)\rq\rq. 
}

\vspace{6mm}

\end{document}